\documentclass{article}
\usepackage[paperwidth=7in, paperheight=10in, margin=.875in]{geometry}
 \usepackage[backref,colorlinks,linkcolor=red,anchorcolor=green,citecolor=blue]{hyperref}
\usepackage{amsfonts,amssymb}
\usepackage{amsmath,amsthm}
\usepackage{graphicx}
\usepackage{cite}
\usepackage{enumerate}
\sloppy

\thinmuskip = 0.5\thinmuskip \medmuskip = 0.5\medmuskip
\thickmuskip = 0.5\thickmuskip \arraycolsep = 0.3\arraycolsep

\newcommand{\bigzero}{\mbox{\normalfont\Large\bfseries 0}}

\newcommand{\bigidentity}{\mbox{\normalfont\Large\bfseries I}}

  \newtheorem{theorem}{Theorem}
  \newtheorem{lemma}[theorem]{Lemma}

   \allowdisplaybreaks
\begin{document}
 \title{Hypocoercivity of the linearized BGK equation with stochastic coefficients 
 \date{ }
 }


          \author{T. Herzing \footnote{University of Bamberg, Kärntenstraße 7, 96052 Bamberg, (tobias.herzing@uni-bamberg.de)}
          \and C. Klingenberg\footnote{University of Würzburg, Emil-Fischer-Str. 40, 97074 Würzburg, (klingen@mathematik.uni-wuerzburg.de).} 
          \and M. Pirner \footnote{University of Würzburg, Emil-Fischer-Str. 40, 97074 Würzburg, (marlies.pirner@mathematik.uni-wuerzburg.de}
          }

         \pagestyle{myheadings} \markboth{Hypocoercivity of the linearized BGK equation with stochastic coefficients}{T. Herzing, C. Klingenberg, M. Pirner} \maketitle

{\bf abstract:}
We consider an approximation of the Boltzmann equation, the Bathnagar-Gross-Krook (BGK) equation. This equation is used in many applications because it is very efficient in numerical simulations. In this paper we study the effect of randomness on a BGK-model. We prove exponential decay rate to a global equilibrium. 
In addition we prove the decay rate of the n-th derivative with respect to the stochastic variable of the solutions. 
The novelties are i.) for the first time hypocoercivity is shown for a linearized BGK model that conserves mass, momentum and energy with randomness in the collision frequency, ii.)new estimates for the decay of the derivatives of the solution with respect to the stochastic variable, which is very useful in applications.
   
{\bf keywords:}  linear BGK-equation with uncertainties; hypocoercivity;  decay estimate; Lyapunov's direct method

    \section{Introduction}\label{intro}

In this paper, our aim is to study the decay to equilibrium of the solution of a linearized BGK model introduced in \cite{Achleitner2018} with a random parameter in the collision frequency. 
We begin by introducing the non-linear BGK model  \begin{align}
    \partial_t f + v \cdot \nabla_x f = \sigma (M-f).
    \label{BGK}
\end{align}
Here $f(x,v,t)$ is the number density distribution of one species of gas  with respect to the phase space measure $dx dv$. \textcolor{black}{Here $x \in (\frac{L}{2 \pi} \mathbb{T})^d$ in the $d-$dimensional torus} of side length $L$ is the position of the coordinate in phase space. $v \in \mathbb{R}^d$ is the velocity coordinate in dimension $d \in \mathbb{N}$ and $t \geq 0$ is the time. In the following, we will consider the dimension $d=1$ . The relaxation operator on the right-hand side of \eqref{BGK} involves the Maxwellian
\begin{align*}
 \textcolor{black}{   M = \frac{n}{\sqrt{2 \pi T}} \exp \left( - \frac{|v-u|^2}{2 T}\right)}
\end{align*}
depending on the macroscopic quantities (density $n$, mean velocity $u$, temperature $T$) defined as
\begin{align*}
    \int f(v) \begin{pmatrix} 1 \\ v \\  (v-u)^2 \end{pmatrix} dv = \begin{pmatrix} n \\ n u \\  n T \end{pmatrix}.
\end{align*}
Moreover, the BGK model \eqref{BGK} contains the collision frequency $\sigma$. The purpose of the collision operator in \eqref{BGK} is to provide an approximation of the 
Boltzmann collision operator that is more computationally tractable, but still maintains important structural properties. It was first introduced in \cite{BGK1954} by Bathnagar, Gross and Krook. It has the same collision invariants as the Boltzmann operator (which lead to conservation of number of particles, momentum and energy) and it satisfies an H-Theorem. 

One natural aspect of kinetic equations are uncertainties. The form of some terms (for instance of the collision frequency) in the equations are usually unjustified due to  modelling errors. The blurred measurements are typically not enough to sufficiently determine all coefficients. 
Therefore, in this paper, we consider the collision frequency $\sigma(z)$ depending on a random parameter $z$. In the whole paper, we assume that this dependency is continuous. 

Now, the aim of the paper is to study the regularity and the large-time behavior of $f$ and of the derivatives $\partial_z^{(n)}f$ in dimension $d=1$. This is based on the hypocoercivity theory which has been studied 
for a large variety of equations. Some considerable examples in the deterministic case are the Fokker-Plank equations \cite{Achleitner2015,Arnold2014}, linear kinetic equations \cite{Dolbeault2015,favre2020hypocoercivity,neumann2015kinetic,Bouin,Desvillettes,Dolbeault,Herau}, a multi-species Boltzmann system \cite{daus2016hypocoercivity} as well as the BGK-equations \cite{Achleitner2016,Achleitner2018,liu2019hypocoercivity}. Especially in \cite{Achleitner2016,Achleitner2018,Arnold2014} it was an issue to find sharp exponential decay rates. 
In the random case, this has been extended in many cases for example to linear kinetic equations
in \cite{Liu2018,Li2017,arnold2020sharp}, for the multi-species Boltzmann equation in \cite{daus2019multi}, the Vlasov-Poisson-Fokker-Planck system \cite{Vlasov} and equations used for traffic modelling \cite{vehicular}. \textcolor{black}{Such a study of the regularity and the large-time behavior of $f$ and of the derivatives $\partial_z^{(n)}f$ allows to adopt the gPC framework for its possible fast convergence. To do that, one mainly needs to prove that the perturbation in the solution continuously depends on the perturbation  where one chooses to perform linearization. According to the standard spectral method theory, the higher degree of continuity means the faster convergence.} \textcolor{black}{For example such a study is provided by \cite{Liu2018} for the Boltzmann equation.}

In this paper, we want to understand the regularity and decay to equilibrium of the function $f$ and also of its derivatives $\partial_z^{(n)}f$ for all $n \in \mathbb{N}$.
We denote by $d\tilde{x}:= L^{-d} dx$ the normalized Lebesque measure and consider normalized initial data
\begin{align}
    \int \int f^{I} d\tilde{x} dv =1, \quad \int \int v f^{I} d\tilde{x} dv = 0, \quad \int \int v^2  f^{I} d\tilde{x} dv = 1
    \label{normalized}
\end{align}
Now, we linearize the BGK equation \eqref{BGK} around the unique space-homogeneous steady state
\begin{align*}
    \mathbb{M}_1(v) = \frac{1}{(2 \pi )^{1/2}} \exp \left( - \frac{v^2}{2} \right)
\end{align*}
as it is performed in \cite{Achleitner2018}. For this, we consider the splitting $f(x,v,t)= \mathbb{M}_1(v) + h(x,v,t,z)$ with the macroscopic quantities of $h$ defined as
\begin{align}
\begin{split}
\label{estunate35}
\omega(x,t,z) &:= \int\limits_{\mathbb{R}} h(x,v,t,z) \; \mathrm{d} v, \quad
\mu(x,t,z) := \int\limits_{\mathbb{R}} v h(x,v,t,z) \; \mathrm{d} v  \\
\tau(x,t,z) &:= \int\limits_{\mathbb{R}} v^2 h(x,v,t,z) \; \mathrm{d} v 
\end{split}
\end{align}
If we insert this ansatz into \eqref{BGK}, do a Taylor expansion of $M$ with respect to $\omega, \mu, \tau$ around $0$, and take only the linear terms, one can derive similar as it is done in \cite{Achleitner2018} the linearized equation
\begin{align} \label{BGKEquation}
\partial_t h(x,v,t,z) + v \partial_x h(x,v,t,z) =\sigma(z) \; \mathcal{L} \left( h (x,v,t,z) \right) 
\end{align} 
with
\begin{align*}
\mathcal{L}(h) := \mathbb{M}_1(v) \left[ \left( \textcolor{black}{\frac{3}{2} }- \frac{v^2}{2} \right) \omega(h) + v \mu(h) + \left( - \frac{1}{2} + \frac{v^2}{2} \right) \tau(h) \right] - h,
\end{align*} 
Uncertainties are also important from the point of view of numerics. Nowadays many numerical methods with the aim to address the issues related to uncertainties have been developed. Well known numerical methods are the Monte-Carlo method, the moment equation approach and the perturbation methods.
In addition there are spectral-methods like the (Galerkin) generalized polynomial chaos method and the stochastic collocation method.
A review of spectral type methods can be found in \cite{Xiu2010}. 
One thing spectral methods have in common is that they provide a higher order of accuracy if the solution has a high level of regularity. Thus it is a common procedure to check the derivatives or show boundedness or even decay in time in some reasonable norm. In this context we point out the paper by Li and Wang \cite{Li2017}, where such a regularity condition has been studied for a large set of kinetic equations. Their paper contains the linear BGK-operator with constant velocity and temperature (where only mass is conserved).

The aim of this article is to extend the results in \cite{Li2017} to the linearized BGK equation \eqref{BGKEquation}. In our case we have a dependency on the macroscopic quantities $\omega,\textcolor{black}{ \mu}, \tau$ instead of a constant Maxwell distribution with a fixed constant mean velocity and temperature. In our case, not only the mass is conserved but also momentum and energy. We will show exponential decay in time with a rate $-\lambda$  independent of the random variable and $\lambda$ strictly positive in a physical reasonable norm. To do so, we use the technique developed in \cite{Achleitner2016, Achleitner2018}. The advantage of this approach is that we directly inherit the optimization strategies made in these articles. 
%
\textcolor{black}{In addition to the aforementioned differences in our model, we also differ from \cite{Li2017} that we look for sharp decay rates. To achieve this, we adapt a method proposed by \cite{Achleitner2016} for the deterministic case.
However, in contrast to the literature for sharp decay rates \cite{Achleitner2016} we need to find estimate which hold for every possible realization of $z$. This requires careful modifications of the already known approaches.}

This has to be understood as kind of an a priori estimate, which means that we find sharp decay rates which serve as lower bound for all possible realizations. This means, the slowest possible decay rate which can be realized tends to be sharp in the sense of \cite{Achleitner2016, Achleitner2018}. 
Furthermore, the resulting decay rates are directly computable. Moreover, we show that this decay rate $\lambda$ also holds for the decay of the derivatives in the random space. That means, computing such a decay rate $\lambda$ for the underlying BGK equation once, gives us immediately a decay rate for the derivatives in the random space.

In summary, the novelty of this article consists of showing hypocoercivity for the linearized BGK model with randomness in the collision frequency conserving mass, momentum and energy. We include new estimates for the decay of the derivatives of the solution with respect to the stochastic variable, which is very useful in applications.

In section \ref{ChapterLinModel}, we will begin by writing the linearized BGK-model with uncertainties in one space dimension as an infinite system of ODEs similar as it is done in \cite{Achleitner2018}.
Section \ref{ChapterEstimate} is divided in three parts. In the first subsection we will extend Lyapunov's direct method in infinite dimensions to equations with a random parameter in the collision frequency. This is a crucial step on our search for decay rates and directly leads to our first decay estimate presented in the second part of this section. Finally, in the third part we deal with decay estimates in $z$-derivatives. The main idea here is to benefit from two Gronwall-like estimate theorems presented in \cite{Li2017}.



\section{Transformation of the linearized BGK equation to an infinite system of ODEs}
\label{ChapterLinModel}
To prepare for the following proofs, we want to rewrite \eqref{BGKEquation} into an (infinite dimensional) system of differential equations as it is done in the deterministic case in \cite{Achleitner2018,Achleitner2016}.
We do this by expanding $h(x,v,t,z)$ in a Fourier series in $x$
\begin{align*}
h(x,v,t,z) = \sum_{k \in \mathbb{Z}} h_k(v,t,z) \mathrm{e}^{\mathrm{i} k \frac{2 \pi}{L} x}.
\end{align*}

Then, we will expand $h_k(\cdot,t,z) \in L^2\left( \mathbb{R} ; \mathbb{M}_1^{-1}(v) \right)$ in normalized Hermite functions
\begin{align*}
    g_m(v):= (\pi m!)^{-1/2} H_m(v) \exp \left(- \frac{v^2}{2}\right), \quad H_m(v):= (-1)^m \exp \left(\frac{v^2}{2}\right) \frac{d^m}{dv^m} \exp \left(- \frac{v^2}{2}\right)
\end{align*}
by writing
\begin{align*}
h_k(v,t,z) = \sum_{m=0}^{\infty} \hat{h}_{k,m}(t,z) g_m(v) \quad \text{with} \quad
\hat{h}_{k,m}(t,z) = \langle h_k(v,\cdot,\cdot), \; g_m(v) \rangle_{L^2(\mathbb{M}_1^{-1})}.
\end{align*}
For each $k \in \mathbb{Z}$ the vector $\hat{h}_k(t,z) = \left( \hat{h}_{k,0}(t,z), \; \hat{h}_{k,1}(t,z), \; \dots \right)^T \in \ell^2(\mathbb{N}_0)$ contains all Hermite coefficients of $h_k(\cdot,t,z)$.
Note that the first three normalized Hermite functions are given by
\begin{align*}
    g_0(v)= \mathbb{M}_1(v), ~ g_1(v)=v \mathbb{M}_1(v), ~ g_2(v)= \frac{v^2-1}{\sqrt{2}} \mathbb{M}_1(v).
\end{align*}
Moreover, we have

\begin{align}
\hat{h}_{k,0}(t,z) &= \int_{\mathbb{R}} h_k(v,\cdot,\cdot) g_0(v) \mathbb{M}_1^{-1}(v) \; \mathrm{d} v = \omega_k(t,z) \label{estimate26} \\
\hat{h}_{k,1}(t,z) &= \int_{\mathbb{R}} h_k(v,\cdot,\cdot) g_1(v) \mathbb{M}_1^{-1}(v) \; \mathrm{d} v = \mu_k(t,z) \label{estimate27}\\
\hat{h}_{k,2}(t,z) &= \int_{\mathbb{R}} h_k(v,\cdot,\cdot) g_2(v) \mathbb{M}_1^{-1}(v) \; \mathrm{d} v = \frac{1}{\sqrt{2}} \left( \tau_k(t,z) - \omega_k(t,z) \right). \label{estimate28}
\end{align}
where $\omega_k, \mu_k, \tau_k$ are the spatial modes of the moments $\omega, \mu, \tau$ given by
\begin{align*}
    \omega_k(t,z) = \int h_k(v,t,z) \; \mathrm{d} v, ~~ \mu_k(t,z) = \int v h_k(v,t,z) \; \mathrm{d} v, ~~ \tau_k(t,z) = \int v^2 k_k(v,t,z) \; \mathrm{d} v.
\end{align*}
It can be shown that \eqref{BGKEquation} is equivalent to

\begin{align} \label{estimate25}
\frac{\partial}{\partial t} h_k + \mathrm{i} k \frac{2 \pi}{L} v h_k &= \sigma(z) \left(  g_0(v) \hat{h}_{k,0} + g_1(v) \hat{h}_{k,1} + g_2(v) \hat{h}_{k,2} - h_k \right), \; k \in \mathbb{Z}; \; t \geq 0.
\end{align}
For details of this derivation see \cite{Achleitner2018}. Since this derivation does not act on the $z$ variable, the derivation is exactly the same as in \cite{Achleitner2018}, so we will not repeat it here.
Now,  the vector of its Hermite coefficients satisfies

\begin{align*}
\frac{\partial}{\partial t} \hat{h}_k(t,z) + \mathrm{i} k \frac{2 \pi}{L} \mathbb{L}_1 \hat{h}_k(t,z) = - \sigma(z) \mathbb{L}_2 \hat{h}_k(t,z), \qquad k \in \mathbb{Z}; \; t \geq 0
\end{align*}

with the operators $\mathbb{L}_1, \; \mathbb{L}_2$ represented by the (infinite) matrices

\begin{align*}
&\mathbb{L}_1 := \begin{pmatrix}
0 & \sqrt{1} & 0 & \cdots \\
\sqrt{1} & 0 & \sqrt{2} & 0 \\
0 & \sqrt{2} & 0 & \sqrt{3} \\
\vdots & 0 & \sqrt{3} & \ddots \\
\end{pmatrix} , &
\mathbb{L}_2 := \text{diag} \;(0, \; 0, \; 0, \; 1, \; 1, \; \cdots)&
\end{align*}
Equivalently, we can also write
\begin{align} \label{BGKEquationFinal}
&\frac{\partial}{\partial t} \hat{h}_k(t,z) = - C_k \hat{h}_k(t,z) \quad k \in \mathbb{Z}; \; t \geq 0
\quad \text{with} \quad
 C_k := \mathrm{i} k \frac{2 \pi}{L} \mathbb{L}_1 + \sigma(z) \mathbb{L}_2.&
\end{align}
We note that this model satisfies the following conservation properties.

\begin{lemma}\label{lem:k=0}
The moments $\omega_0(t,z), \; \mu_0(t,z), \; \tau_0(t,z)$ satisfy $$\omega_0(t,z)=0, \quad \mu_0(t,z)=0, \quad\tau_0(t,z)=0,$$
for all $t>0$.
\end{lemma}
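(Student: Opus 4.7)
The plan is to exploit the fact that for the zeroth Fourier mode $k=0$, the transport term in \eqref{BGKEquationFinal} vanishes and only the relaxation $\sigma(z)\mathbb{L}_2$ remains, which has a three-dimensional kernel corresponding exactly to the macroscopic moments we want to track.

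Concretely, I would first set $k=0$ in \eqref{BGKEquationFinal}, yielding $\partial_t \hat{h}_0(t,z) = -\sigma(z) \mathbb{L}_2 \hat{h}_0(t,z)$. Since $\mathbb{L}_2 = \text{diag}(0,0,0,1,1,\dots)$, the first three components of this ODE reduce to $\partial_t \hat{h}_{0,m}(t,z) = 0$ for $m=0,1,2$, so $\hat{h}_{0,m}(t,z) = \hat{h}_{0,m}(0,z)$ for all $t \geq 0$ and $m \in \{0,1,2\}$. By the identifications \eqref{estimate26}--\eqref{estimate28}, this means $\omega_0(t,z) = \omega_0(0,z)$, $\mu_0(t,z) = \mu_0(0,z)$ and $\tau_0(t,z) - \omega_0(t,z) = \tau_0(0,z) - \omega_0(0,z)$.

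Next I would match these initial values to the normalization \eqref{normalized}. The zeroth Fourier mode in $x$ is exactly the spatial average (with the normalized measure $d\tilde{x}$), so $\omega_0(0,z) = \int \int h^{I}(x,v,z) \, d\tilde{x}\, dv$ and analogous identities hold for $\mu_0(0,z)$ and $\tau_0(0,z)$. Writing $f^{I} = \mathbb{M}_1 + h^{I}$ and using that $\int \mathbb{M}_1 \, dv = 1$, $\int v \mathbb{M}_1 \, dv = 0$, $\int v^2 \mathbb{M}_1 \, dv = 1$ (together with $\int d\tilde{x} = 1$), the normalization \eqref{normalized} translates into
\begin{equation*}
\int\!\!\int h^{I} \, d\tilde{x}\, dv = 0, \qquad \int\!\!\int v h^{I} \, d\tilde{x}\, dv = 0, \qquad \int\!\!\int v^2 h^{I} \, d\tilde{x}\, dv = 0,
\end{equation*}
i.e.\ $\omega_0(0,z) = \mu_0(0,z) = \tau_0(0,z) = 0$. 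Combined with the previous step, this gives $\omega_0(t,z) = 0$ and $\mu_0(t,z) = 0$ for all $t$, and $\tau_0(t,z) - \omega_0(t,z) = 0$ then forces $\tau_0(t,z) = 0$.

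There is no real obstacle here; the statement is essentially the conservation of mass, momentum and energy for the linearized BGK flow restricted to the zeroth spatial mode, and all the work has been pushed into the Hermite/Fourier reformulation leading to \eqref{BGKEquationFinal}. The only point worth being careful about is the assumption that the normalization \eqref{normalized} is understood to hold for every realization of the random parameter $z$ (so that $\omega_0(0,z)$, $\mu_0(0,z)$, $\tau_0(0,z)$ vanish pointwise in $z$); this is implicit in the problem setup.
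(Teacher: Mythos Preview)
Your proof is correct and follows essentially the same idea as the paper: show that the three macroscopic moments of the $k=0$ mode are conserved in time, then use the normalization \eqref{normalized} on the initial data to conclude they vanish. The only cosmetic difference is that the paper works directly with the velocity-space equation \eqref{estimate25} (multiplying by $1,v,v^2$ and integrating in $v$), whereas you use the equivalent Hermite-coefficient system \eqref{BGKEquationFinal} and read off the conservation from the zeros in $\mathbb{L}_2$; both arrive at the same conservation laws and the same conclusion.
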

This can be proven by multiplying \eqref{estimate25} for $k=0$ by $1, v, v^2$ and then integrating with respect to $v$. In the resulting equations one can compute the Maxwellian integrals and use the relations 
 \eqref{estimate26}, 
 \eqref{estimate27}.
 \eqref{estimate28} to deduce that $\omega_0(t,z), \; \mu_0(t,z)$ and $\tau_0(t,z)$ are constant functions in $t$ and then equal to zero due to the assumption on the initial data \eqref{normalized}. 





Since in the following, we also want to find estimates  for $\partial_z^{(n)}h,$ we will also consider the $n-th$ derivative of equation \eqref{BGKEquation} with respect to $z$, and get 
\begin{align} \label{BGKEquationDerivativeOrginal}
\partial _z^{(n)} \partial_t h(x,v,t,z) + v \partial _z^{(n)} \partial_x h(x,v,t,z) &= \partial _z^{(n)} \left(\sigma(z) \mathcal{L} \left( h (x,v,t,z) \right)  \right)
\end{align}
With the same approach as above, this leads to
\begin{multline} \label{BkGEquationDerivative}
\frac{\partial^{(n)}}{\partial z^{(n)}} \; \frac{\partial}{\partial t} \hat{h}_k(t,z) = \\ - \mathrm{i} k \frac{2 \pi}{L} \mathbb{L}_1 \frac{\partial^{(n)}}{\partial z^{(n)}} \hat{h}_k(t,z) - \sum_{i=0}^n \binom{n}{i} \frac{\partial^{(i)}}{\partial z^{(i)}} \sigma (z) \mathbb{L}_2 \frac{\partial^{(n-i)}}{\partial z^{(n-i)}} \hat{h}_k(t,z)
\end{multline} 
for $k \in \mathbb{Z}; \; t \geq 0$. Alternatively, directly differentiating \eqref{BGKEquationFinal} $n$ times with respect to $z$ leads to the same result.

\section{Decay rate for a linearized BGK model with uncertainties} \label{ChapterEstimate}
In this section, we will study the decay to equilibrium of the function $h$. For this, we will follow the strategy of \cite{Achleitner2018}. We define the matrices $P_k$ as
\begin{align} \label{P_k}
P_k := \begin{pmatrix}
\begin{matrix}
  1 & -\frac{\mathrm{i} \alpha}{k} & 0 & 0 \\
  \frac{\mathrm{i} \alpha}{k} & 1 & -\frac{\mathrm{i} \beta}{k} & 0 \\
  0 & \frac{\mathrm{i} \beta}{k} & 1 & -\frac{\mathrm{i} \gamma}{k} \\
  0 & 0 & \frac{\mathrm{i} \gamma}{k} & 1
  \end{matrix}
  & \vline & \bigzero \\
\hline
  \bigzero & \vline &
  \bigidentity
\end{pmatrix} \qquad k \in \mathbb{N}
\end{align}
with $I$ being the identity matrix and $\alpha, \; \beta, \; \gamma \; \in \mathbb{R}$ will be chosen later in an appropriate way. We start with the following lemma.
\begin{lemma}
\label{th:MatrixEstimate}
Assume  $0 < \sigma _{min}  \leq \sigma (z) \leq \sigma _{max}$, $L> 0$. Choose the matrices $P_k$ as in \eqref{P_k} and $C_k$ from \eqref{BGKEquationFinal}. Then there exists an $\alpha _{max} > 0$, such that with $\alpha \in (0, \alpha _{max}), \beta = \sqrt{2} \alpha, \gamma = \sqrt{3} \alpha$ the matrices $P_k$ and $C_k^{\ast} P_k + P_k C_k$ are positive definite for all $k \in \mathbb{Z} \setminus \lbrace 0 \rbrace$ and
\begin{align*}
C_k^{\ast} P_k + P_k C_k \geq 2 \mu P_k 
\end{align*}
with a $\mu > 0$ independent of $k$.
\end{lemma}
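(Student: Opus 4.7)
The plan is to adapt the deterministic Lyapunov analysis of \cite{Achleitner2018} by treating $\sigma(z)$ as a parameter confined to the fixed compact interval $[\sigma_{min},\sigma_{max}]$. All constants arising from the deterministic construction will then be tracked as explicit functions of $\sigma$, and the final estimate will hold uniformly in $z$.

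First I would verify positive definiteness of $P_k$. Since $P_k$ decomposes into an infinite identity tail and a Hermitian upper-left $4\times 4$ block with $1$ on the diagonal and the off-diagonal entries $\pm\mathrm{i}\alpha/k,\pm\mathrm{i}\beta/k,\pm\mathrm{i}\gamma/k$, Sylvester's criterion shows that for the specific choice $\beta=\sqrt{2}\alpha,\gamma=\sqrt{3}\alpha$ all leading minors are polynomials in $\alpha/k$ with value $1$ at $\alpha=0$. They therefore stay strictly positive whenever $\alpha$ lies below an explicit threshold $\alpha_0$, uniformly over $|k|\geq 1$ since $|1/k|\leq 1$.

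Next I would compute
\[
Q_k := C_k^{\ast} P_k + P_k C_k = \mathrm{i} k \frac{2\pi}{L}\bigl(P_k \mathbb{L}_1 - \mathbb{L}_1 P_k\bigr) + \sigma(z)\bigl(P_k \mathbb{L}_2 + \mathbb{L}_2 P_k\bigr),
\]
which is Hermitian since $\mathbb{L}_1$ and $\mathbb{L}_2$ are. The crucial algebraic point is that the $1/k$ entries of $P_k$ exactly cancel the prefactor $k$ in the commutator, so the upper $4\times 4$ block of $\mathrm{i}k[P_k,\mathbb{L}_1]$ reduces to an explicit $k$-independent real symmetric matrix whose entries depend on $\alpha$ alone; the choices $\beta=\sqrt{2}\alpha,\gamma=\sqrt{3}\alpha$ inherited from \cite{Achleitner2018} are precisely those that make this block positive definite for small $\alpha$. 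On the infinite tail one has $\mathbb{L}_2=I$, so the dissipative part contributes at least $2\sigma_{min} I$ there. It remains to bound the cross-coupling between the top block and the tail, which is generated partly by the commutator (entries proportional to $\alpha\sqrt{4},\alpha\sqrt{5}$) and partly by the dissipation (entries of order $\sigma(z)\gamma/k$); a Schur-complement or Young-type inequality absorbs this coupling into the diagonal blocks at the price of a controllable perturbation of $\alpha$ and $\mu$.

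The main obstacle is obtaining $Q_k\geq 2\mu P_k$ with $\mu>0$ uniform in both $k\in\mathbb{Z}\setminus\{0\}$ and $z$. Uniformity in $k$ is delicate because, while the $1/k$ scalings of $P_k$ make the top block $k$-independent, the cross-coupling terms between the top block and the tail remain of order one; absorbing them forces $\alpha_{max}$ to be chosen small in a way that depends on $L$, $\sigma_{max}$, and the entries of $\mathbb{L}_1$. Uniformity in $z$ then follows automatically by replacing $\sigma(z)$ by $\sigma_{min}$ in the dissipative contributions and by $\sigma_{max}$ in the coupling controls, so that the final $\mu$ can be taken as any positive number strictly smaller than both $\sigma_{min}$ and the spectral gap of the deterministic upper block obtained after the absorption step.
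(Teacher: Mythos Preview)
Your overall plan---treat $\sigma(z)\in[\sigma_{min},\sigma_{max}]$ as a parameter, verify $P_k>0$ by Sylvester, compute $Q_k=C_k^\ast P_k+P_kC_k$, bound its smallest eigenvalue from below, and pass to infima over $\sigma$ at the end---is exactly what the paper does. But your description of the block structure of $Q_k$ is inaccurate, and as a result the ``absorption of cross-coupling'' step you propose is both unnecessary and incorrectly stated.

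The structural fact you are missing is that $Q_k$ is \emph{exactly} block-diagonal: a $5\times5$ Hermitian block $D_{k,\alpha,\sigma(z)}$ on indices $0,\dots,4$ and $2\sigma(z)I$ on the tail. Indeed $[P_k,\mathbb{L}_1]=[P_k-I,\mathbb{L}_1]$; since $P_k-I$ is supported on indices $0,\dots,3$ and $\mathbb{L}_1$ is tridiagonal, the commutator is supported on $0,\dots,4$, and because $P_k$ already equals the identity at indices $4$ and $5$ one gets $[P_k,\mathbb{L}_1]_{4,5}=0$. There is therefore no ``$\alpha\sqrt{5}$'' coupling and no Schur complement or Young inequality is needed. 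The paper simply runs Sylvester's criterion on the $5\times5$ block, which after the choice $\beta=\sqrt{2}\alpha$, $\gamma=\sqrt{3}\alpha$ splits further into $\mathrm{diag}(2l\alpha,2l\alpha)$ and an explicit $3\times3$ block whose leading minors are computed by hand. A second inaccuracy: the finite block of $Q_k$ is \emph{not} $k$-independent, because the anticommutator $\sigma(z)(\mathbb{L}_2P_k+P_k\mathbb{L}_2)$ produces a $\pm i\sigma(z)\gamma/k$ entry at positions $(2,3),(3,2)$ \emph{inside} the block, not as block--tail coupling. The paper deals with this by checking that the relevant $3\times3$ minor satisfies $\delta_3(k,\alpha,\sigma)\ge\delta_3(1,\alpha,\sigma)$, so uniformity in $k$ reduces to the worst case $|k|=1$ rather than requiring any absorption argument. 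Uniformity in $z$ is then obtained, as you correctly say, by minimising the resulting explicit lower bound over $\sigma\in[\sigma_{min},\sigma_{max}]$.
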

\textcolor{black}{The proof consists of standard algebra derivations. Therefore, we will move the proof to the appendix.}
Because of the structure of $P_k$ we had to exclude the case $k=0$ in the proof above. We want to catch up this now. This case can be deduced from lemma \ref{lem:k=0}. If we insert this result into \eqref{BGKEquation}, we obtain for $k=0$
\begin{align} \label{CaseK=0}
\frac{\partial}{\partial t} h_0(v,t,z) = - \sigma(z) \; h_0(v,t,z).
\end{align}
Using Gronwall's lemma, this shows the decay in the case $k=0$. \\

\subsection{Decay estimate}
Now, we continue with the decay estimate on $h$. For this, we define 
\begin{align} \label{EntropyFunktional}
\mathcal{E}(h)(t,z) := \sum_{k \in \mathbb{Z}} \langle h_k(v,z), P_k h_k(v,z) \rangle_{L^2(\mathbb{M}_1^{-1})},
\end{align}
Here the matrices $P_0 := I$ and $P_k$ are regarded as bounded operators on $\ell^2(\mathbb{N}_0)$ (and thus also on $L^2(\mathbb{M}_1^{-1})$).
\begin{theorem}\label{th:decay estimate}
Let $h(t)$ be a solution of \eqref{BGKEquation} with $0  < L$, $0 < \sigma _{min}  \leq \sigma (z) \leq \sigma _{max}$ and $\mathcal{E} (h(0)) (z) < \infty$, then we have 
\begin{align*}
\mathcal{E}\left( h(t) \right)(z) \leq \mathrm{e}^{- 2 \lambda t} \mathcal{E} \left( h(0) \right)(z)
\end{align*}
with some $\lambda > 0$ for all $z$.
\end{theorem}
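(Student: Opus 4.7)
\medskip

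The plan is to differentiate $\mathcal{E}(h(t))(z)$ in time and use the matrix inequality from Lemma \ref{th:MatrixEstimate} mode-by-mode. First I would use Parseval to identify $\langle h_k, P_k h_k\rangle_{L^2(\mathbb{M}_1^{-1})}$ with $\langle \hat{h}_k, P_k \hat{h}_k\rangle_{\ell^2(\mathbb{N}_0)}$ (where $P_k$ acts as a bounded operator and only the upper left $4\times 4$ block is non-trivial), so that the ODE system \eqref{BGKEquationFinal} can be used directly. In view of the bound \eqref{estimate3} on $P_k$, the quantity $\mathcal{E}(h(t))(z)$ is finite and equivalent, up to constants depending only on $\alpha$, to $\sum_k \|\hat{h}_k\|_{\ell^2}^2 = \|h(t,z)\|^2_{L^2_x L^2_v(\mathbb{M}_1^{-1})}$.

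For each $k\in\mathbb{Z}\setminus\{0\}$, using \eqref{BGKEquationFinal},
\begin{align*}
\frac{d}{dt}\langle \hat{h}_k,P_k\hat{h}_k\rangle
= -\langle C_k\hat{h}_k,P_k\hat{h}_k\rangle - \langle \hat{h}_k,P_k C_k\hat{h}_k\rangle
= -\langle \hat{h}_k, (C_k^{\ast}P_k+P_kC_k)\hat{h}_k\rangle,
\end{align*}
and Lemma \ref{th:MatrixEstimate} (with the concrete choice $\beta=\sqrt{2}\alpha,\ \gamma=\sqrt{3}\alpha,\ \alpha\in(0,\alpha_{max})$) gives
$
\frac{d}{dt}\langle \hat{h}_k,P_k\hat{h}_k\rangle \le -2\mu\,\langle \hat{h}_k,P_k\hat{h}_k\rangle,
$
with $\mu>0$ independent of $k$ and of $z$ (since the estimate was built over all admissible $\sigma(z)\in[\sigma_{min},\sigma_{max}]$ via \eqref{lambdaMin}). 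For $k=0$, Lemma \ref{lem:k=0} reduces \eqref{BGKEquation} to \eqref{CaseK=0}, so $\frac{d}{dt}\|h_0\|^2 = -2\sigma(z)\|h_0\|^2 \le -2\sigma_{min}\|h_0\|^2$, i.e.\ the same type of estimate with $P_0=I$.

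Setting $\lambda := \min\{\mu,\sigma_{min}\}>0$ and summing over $k\in\mathbb{Z}$ yields
\begin{align*}
\frac{d}{dt}\mathcal{E}(h(t))(z) \le -2\lambda\,\mathcal{E}(h(t))(z),
\end{align*}
and Gronwall's lemma finishes the proof.

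The one genuinely non-routine point is the interchange of $\frac{d}{dt}$ and the infinite sum defining $\mathcal{E}$; this is where the hypothesis $\mathcal{E}(h(0))(z)<\infty$ is used. I would justify it by working with truncated partial sums $\sum_{|k|\le N}$, in which the above identity is a finite-dimensional computation, and then pass to the limit using the uniform (in $k$) bound from Lemma \ref{th:MatrixEstimate} together with the monotone decay of each summand, so that the sum is dominated by its value at $t=0$. Everything else is bookkeeping: the decay rate $\lambda$ depends on $L$, $\sigma_{min}$, $\sigma_{max}$ but not on $z$, which is precisely the $z$-uniform statement the theorem asks for.
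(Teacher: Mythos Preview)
Your proposal is correct and follows essentially the same approach as the paper: differentiate $\mathcal{E}$ in time, use Lemma~\ref{th:MatrixEstimate} for $k\neq 0$ and the reduced equation \eqref{CaseK=0} for $k=0$, set $\lambda=\min\{\mu,\sigma_{min}\}$, and conclude by Gronwall. If anything, you are slightly more careful than the paper, which does not discuss the interchange of $\partial_t$ with the infinite sum over $k$.
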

\begin{proof}
Equation \eqref{CaseK=0} leads to
\begin{align*}
\frac{\partial}{\partial t} \left\langle h_0(v), P_0 h_0(v) \right\rangle_{L^2(\mathbb{M}_1^{-1})} &= \left\langle \frac{\partial}{\partial t} h_0(v), h_0(v) \right\rangle_{L^2(\mathbb{M}_1^{-1})} + \left\langle h_0(v), \frac{\partial}{\partial t} h_0(v) \right\rangle_{L^2(\mathbb{M}_1^{-1})} 
\\
&= - 2 \sigma(z) \left\langle h_0(v),h_0(v) \right\rangle_{L^2(\mathbb{M}_1^{-1})} \\
&\leq - 2 \sigma_{min} \left\langle h_0(v),h_0(v) \right\rangle_{L^2(\mathbb{M}_1^{-1})}
\end{align*}
and thus using lemma \ref{th:MatrixEstimate} we get
\begin{align*}
\frac{\partial}{\partial t} \mathcal{E}(h)(t,z) &:= \frac{\partial}{\partial t} \sum_{k \in \mathbb{Z}} \left\langle h_k(v,z), P_k h_k(v,z) \right\rangle_{L^2(\mathbb{M}_1^{-1})} \\
&= \sum_{k \in \mathbb{Z} \setminus \lbrace 0 \rbrace} \frac{\partial}{\partial t} \left\langle \hat{h}_k(z), P_k \hat{h}_k(z) \right\rangle_{\ell^2} +  \frac{\partial}{\partial t} \left\langle \hat{h}_0(z), P_0 \hat{h}_0(z) \right\rangle_{\ell^2}\\
&\leq - \sum_{k \in \mathbb{Z}\setminus \lbrace 0 \rbrace} \left\langle \hat{h}_k(z), (C_k^{\ast} P_k + P_k C_k) \hat{h}_k(z) \right\rangle_{\ell^2}  - 2 \sigma_{min} \left\langle \hat{h}_0(z), P_0 \hat{h}_0(z) \right\rangle_{\ell^2} \\
& \leq - 2 \mu \sum_{k \in \mathbb{Z} \setminus \lbrace 0 \rbrace} \left\langle \hat{h}_k(z), P_k \hat{h}_k(z) \right\rangle_{\ell^2} - 2 \sigma_{min} \left\langle \hat{h}_0(z), P_0 \hat{h}_0(z) \right\rangle_{\ell^2} \\
&= - 2 \mu \sum_{k \in \mathbb{Z} \setminus \lbrace 0 \rbrace} \left\langle h_k(v,z), P_k h_k(v,z) \right\rangle_{L^2(\mathbb{M}_1^{-1})} \\
& \quad  \; - 2 \sigma_{min} \left\langle h_0(v,z), P_0 h_0(v,z) \right\rangle_{L^2(\mathbb{M}_1^{-1})} \\
& \leq - 2  \lambda \; \mathcal{E}(h)(t,z)
\end{align*}
where we define $\lambda= \min \lbrace \mu, \sigma_{min} \rbrace$ with $\mu$ from \eqref{declayrate}. Applying Gronwall's lemma finishes the proof.
\end{proof}


\subsection{Decay estimates in z-derivatives}
For both, analytic and numeric reasons, one might also be interested in the decay of the $n$-th derivative of a solution with respect to the random variable $z$. 
For the following, we define
\begin{align*}
\mathcal{F} \left( f, g \right) := \sum_{k \in \mathbb{Z}} \left\langle f (k), P_k g(k) \right\rangle_{\ell^2} \; \; \text{for} \; \; f(k),g(k) :  \mathbb{Z} \mapsto \ell^2
\end{align*}
and we denote $|| f||_{\mathcal{F}}:= \sqrt{\mathcal{F}(f,f)}$.




\subsubsection{Special case: $\sigma (z)$ linear in $z$}
 We will show that in the special case of linear random dependence, which means that $\sigma(z)$ is linear in $z$, the linearized BGK-equation \eqref{BGKEquation} still follows  an exponential decay with the same rate $\lambda$ as in the case without $z$ derivatives. 

\begin{theorem}\label{th:decay in derivatives}
Let $h(t)$ be a solution of \eqref{BGKEquation} with $0  < L$, $0 < \sigma _{min}  \leq \sigma (z) \leq \sigma _{max}$. Further we assume $\sigma (z)$ to be linear in $z$ and $\mathcal{E}\left(\frac{\partial^{(n)}}{\partial z^{(n)}} h\right)(0,z) < \infty \; $ for all $ n \in \mathbb{N}_0$. Then, for all $n \in \mathbb{N}_0$ and for all $z$, we have
\begin{align} \label{claim1}
\sqrt{ \mathcal{E}\left(\frac{\partial^{(n)}}{\partial z^{(n)}} h\right)} (t,z)
\leq  \mathrm{e}^{- \lambda t} \sum_{i=0}^n \binom{n}{i} \left( \tilde{c} \; t \right)^i \sqrt{\mathcal{E}\left(\frac{\partial^{(n-i)}}{\partial z^{(n-i)}} h \right)} (0, z)
\end{align}
with the same positive $\lambda$ as in theorem \ref{th:decay estimate}.  Further if $\mathcal{E}\left(\frac{\partial^{(n)}}{\partial z^{(n)}} h\right)(0,z) \leq H^{2 n}$ for a constant $H>0$ and for all $n \in \mathbb{N}_0$ we can simplify \eqref{claim1} to
\begin{align} \label{claim2}
\sqrt{ \mathcal{E}\left(\frac{\partial^{(n)}}{\partial z^{(n)}}h\right)} (t,z)
\leq  \mathrm{e}^{- \lambda t} \left( H + \tilde{c} t \right)^n.
\end{align}
\end{theorem}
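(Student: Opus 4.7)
The plan is to build on Theorem \ref{th:decay estimate} by adding a forcing term coming from the lower-order $z$-derivatives, and then to close the induction on $n$ through a Gronwall-type lemma (of the kind in \cite{Li2017}) together with the binomial theorem.

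First I would exploit linearity of $\sigma(z)$: in the expanded equation \eqref{BkGEquationDerivative}, the terms $\partial_z^{(i)}\sigma(z)$ vanish for every $i \geq 2$, so the sum on the right-hand side collapses to the two contributions $i=0$ and $i=1$. Written in terms of the operator $C_k$ from \eqref{BGKEquationFinal}, this produces the affine ODE
\begin{align*}
\frac{\partial}{\partial t}\frac{\partial^{(n)}}{\partial z^{(n)}}\hat{h}_k(t,z) = -C_k\frac{\partial^{(n)}}{\partial z^{(n)}}\hat{h}_k(t,z) - n\,\sigma'(z)\,\mathbb{L}_2\,\frac{\partial^{(n-1)}}{\partial z^{(n-1)}}\hat{h}_k(t,z)
\end{align*}
for $k\neq 0$, and the simpler relation $\partial_t \partial_z^{(n)} h_0 = -\sigma(z)\partial_z^{(n)}h_0 - n\sigma'(z)\partial_z^{(n-1)}h_0$ for $k=0$.

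Next, mimicking the proof of Theorem \ref{th:decay estimate}, I would differentiate $\mathcal{E}(\partial_z^{(n)}h)(t,z)=\mathcal{F}(\partial_z^{(n)}\hat h, \partial_z^{(n)}\hat h)$ in $t$. The leading part yields $-\sum_{k\neq 0}\langle \partial_z^{(n)}\hat h_k, (C_k^\ast P_k + P_k C_k)\partial_z^{(n)}\hat h_k\rangle$ together with the analogous $k=0$ term, which by Lemma \ref{th:MatrixEstimate} and the $k=0$ calculation is bounded above by $-2\lambda\,\mathcal{E}(\partial_z^{(n)}h)$. The forcing term, after Cauchy–Schwarz on $\mathcal{F}$ (using that $\mathbb{L}_2$ is a bounded operator on $\ell^2$ and that $P_k$ defines a norm equivalent to the $\ell^2$-norm via \eqref{estimate3}), can be controlled by $2n\,|\sigma'(z)|\,C_0\,\|\partial_z^{(n)}h\|_\mathcal{F}\|\partial_z^{(n-1)}h\|_\mathcal{F}$ for some universal $C_0$. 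Dividing by $2\|\partial_z^{(n)}h\|_\mathcal{F}$ yields
\begin{align*}
\frac{d}{dt}\|\partial_z^{(n)}h\|_\mathcal{F}(t,z) \leq -\lambda\,\|\partial_z^{(n)}h\|_\mathcal{F}(t,z) + \tilde c\, n\,\|\partial_z^{(n-1)}h\|_\mathcal{F}(t,z),
\end{align*}
with $\tilde c = C_0\,\|\sigma'\|_\infty$ independent of $n$ and $k$.

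Finally I would induct on $n$. The base case $n=0$ is exactly Theorem \ref{th:decay estimate}. For the step, multiplying by $e^{\lambda t}$ turns the differential inequality into $\frac{d}{dt}(e^{\lambda t}\|\partial_z^{(n)}h\|_\mathcal{F}) \leq \tilde c\,n\,e^{\lambda t}\|\partial_z^{(n-1)}h\|_\mathcal{F}$. Inserting the induction hypothesis on the right, integrating from $0$ to $t$, and using Pascal's identity $n\binom{n-1}{i}/(i+1) = \binom{n}{i+1}$ reproduces exactly the binomial sum in \eqref{claim1}. The simplified bound \eqref{claim2} then follows by substituting the assumption $\sqrt{\mathcal{E}(\partial_z^{(n-i)}h)(0,z)} \leq H^{n-i}$ and applying the binomial theorem to $(H+\tilde c t)^n$.

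The main obstacle I anticipate is step two, namely producing a clean Cauchy–Schwarz estimate for the cross term with a constant $\tilde c$ that is genuinely uniform in $k$ and $n$; this requires exploiting that $\mathbb{L}_2$ is a projection and that the norm equivalence constants in \eqref{estimate3} do not depend on $k$. Everything else is then an essentially routine combination of the matrix estimate of Lemma \ref{th:MatrixEstimate}, a Gronwall argument, and the binomial identity.
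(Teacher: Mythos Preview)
Your proposal is correct and follows essentially the same approach as the paper: collapse \eqref{BkGEquationDerivative} using linearity of $\sigma$, differentiate the $P_k$-weighted energy, apply Lemma~\ref{th:MatrixEstimate} (and the $k=0$ computation) to the leading term, and control the cross term via Cauchy--Schwarz and the boundedness of $\mathbb{L}_2$ to arrive at the scalar differential inequality $\frac{d}{dt}\|\partial_z^{(n)}h\|_{\mathcal F}\le -\lambda\|\partial_z^{(n)}h\|_{\mathcal F}+\tilde c\,n\,\|\partial_z^{(n-1)}h\|_{\mathcal F}$. The only cosmetic difference is in the last step: the paper invokes Lemma~\ref{lem:Li1} (from \cite{Li2017}) to pass from this inequality to \eqref{claim1}, whereas you propose to carry out the induction on $n$ explicitly via the integrating factor $e^{\lambda t}$ and Pascal's identity---which is precisely the content of that lemma.
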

\begin{proof}
We want to show the claim in two steps. First, we prove that the inequality 
\begin{align} \label{estimate8}
\frac{\partial}{\partial t} ||\hat{h}_k^{(n)} (t,z) ||_{\mathcal{F}} \leq - \lambda \; || \hat{h}_k^{(n)} (t,z) ||_{\mathcal{F}} + \tilde{c} \; n \; || \hat{h}_k^{(n-1)} (t,z) ||_{\mathcal{F}}
\end{align}
holds for all $ n \in \mathbb{N}_0$. Then, this will imply \eqref{claim1}
for all $z$ as a direct consequence of lemma \ref{lem:Li1}. To start with this we first note that because of
\begin{align*}
\sigma^{(n)} (z) = 0 ~~ \text{for all} ~~ n > 1, ~~ \sigma^{(1)} (z) = c_1
\end{align*}
with $c_1$ being a constant, equation \eqref{BkGEquationDerivative} simplifies to
\begin{align*}
\frac{\partial}{\partial t} \hat{h}^{(n)}_k(t,z) &= - \mathrm{i} k \frac{2 \pi}{L} \mathbb{L}_1 \hat{h}^{(n)}_k(t,z) - \sigma(z) \mathbb{L}_2 \hat{h}^{(n)}_k(t,z) - n c_1 \mathbb{L}_2 \hat{h}^{(n-1)}_k(t,z) \\
&= - \left( C_k \hat{h}^{(n)}_k(t,z) + n c_1 \mathbb{L}_2 \hat{h}^{(n-1)}_k(t,z) \right) \quad k \in \mathbb{Z}; \; t \geq 0
\end{align*}
with $C_k$ from \eqref{BGKEquationFinal}. Thus, for each $k \in \mathbb{Z} \setminus \lbrace 0 \rbrace$ we have
\begin{align*}
&\frac{\partial}{\partial t} \left\langle \hat{h}^{(n)}_k(t,z), P_k  \hat{h}^{(n)}_k(t,z)\right\rangle_{\ell^2} \\
&= \left\langle  \frac{\partial}{\partial t} \hat{h}^{(n)}_k(t,z), P_k  \hat{h}^{(n)}_k(t,z) \right\rangle_{\ell^2} + \left\langle \hat{h}^{(n)}_k(t,z), P_k \frac{\partial}{\partial t} \hat{h}^{(n)}_k(t,z) \right\rangle_{\ell^2} \\
&= - \left\langle C_k \hat{h}^{(n)}_k(t,z) + n c_1 \mathbb{L}_2 \hat{h}^{(n-1)}_k(t,z), P_k  \hat{h}^{(n)}_k(t,z) \right\rangle_{\ell^2} \\
&\quad - \left\langle \hat{h}^{(n)}_k(t,z), P_k \left( C_k \hat{h}^{(n)}_k(t,z) + n c_1 \mathbb{L}_2 \hat{h}^{(n-1)}_k(t,z) \right) \right\rangle_{\ell^2} \\
&= - \left\langle \hat{h}^{(n)}_k(t,z), \left( C_k^{\ast} P_k + P_k C_k \right) \hat{h}^{(n)}_k(t,z) \right\rangle_{\ell^2} \\
&\quad + \left\langle - n c_1 \mathbb{L}_2 \hat{h}^{(n-1)}_k(t,z), P_k  \hat{h}^{(n)}_k(t,z) \right\rangle_{\ell^2} + \left\langle \hat{h}^{(n)}_k(t,z), - n c_1 P_k \mathbb{L}_2 \hat{h}^{(n-1)}_k(t,z) \right\rangle_{\ell^2}.
\end{align*}
Thus using theorem \ref{th:decay estimate} we get
\begin{align} \label{estimate11}
&\frac{\partial}{\partial t} \left\langle \hat{h}^{(n)}_k(t,z), P_k  \hat{h}^{(n)}_k(t,z) \right\rangle_{\ell^2} \leq 
- 2 \mu \left\langle \hat{h}^{(n)}_k(t,z), P_k \hat{h}^{(n)}_k(t,z) \right\rangle_{\ell^2}\\ 
&+ \left\langle - n c_1 \mathbb{L}_2 \hat{h}^{(n-1)}_k(t,z), P_k  \hat{h}^{(n)}_k(t,z) \right\rangle_{\ell^2}
+ \left\langle \hat{h}^{(n)}_k(t,z), - n c_1 P_k \mathbb{L}_2 \hat{h}^{(n-1)}_k(t,z) \right\rangle_{\ell^2}. \nonumber
\end{align}
Now we want to get an estimate of the form \eqref{estimate11} for the case $k=0$. Using \eqref{CaseK=0} we get
\begin{align*}
\frac{\partial}{\partial t} h_0^{(n)}(v,t,z) &= \frac{\partial^{(n)}}{\partial z^{(n)}} \big( -\sigma (z) h_0(v,t,z) \big) 
= \sum_{i = 0}^n \binom{n}{i}  -\sigma^{(i)} (z) \; h_0^{(n-i)}(v,t,z) \\
&= -\sigma (z) h_0^{(n)}(v,t,z) - n c_1 h_0^{(n-1)}(v,t,z)
\end{align*}
and thus with the same arguments as in the estimate $ k \neq 0$ above
\begin{align} \label{estimate7}
&\frac{\partial}{\partial t} \left\langle  h_0^{(n)}(v,z), P_0  h_0^{(n)}(v,z) \right\rangle_{L^2(\mathbb{M}_1^{-1})} 
= - 2 \sigma(z) \left\langle  h_0^{(n)}(v,z), h_0^{(n)}(v,z) \right\rangle_{L^2(\mathbb{M}_1^{-1})} \\
& +  \left\langle - n c_1 h_0^{(n-1)}(v,z), h_0^{(n)}(v,z) \right\rangle_{L^2(\mathbb{M}_1^{-1})} 
+ \left\langle  h_0^{(n)}(v,z), - n c_1 h_0^{(n-1)}(v,z) \right\rangle_{L^2(\mathbb{M}_1^{-1})} \nonumber 
\end{align}
Now we set $\lambda := \min \lbrace \mu, \sigma_{min} \rbrace$ and remember that $\left\langle \cdot, \cdot \right\rangle_{L^2(\mathbb{M}_1^{-1})} = \left\langle \hat{\cdot}, P_0 \hat{\cdot} \right\rangle_{\ell^2}$ with $P_0 =I$. Thus combining \eqref{estimate11} with \eqref{estimate7} and summing up over all $k \in \mathbb{Z}$ leads to
\begin{align} \label{estimate13}
&\frac{\partial}{\partial t} \mathcal{F} \left(  \hat{h}_k^{(n)}(t,z),  \hat{h}_k^{(n)}(t,z) \right) \leq - 2 \lambda \; \mathcal{F} \left(  \hat{h}_k^{(n)}(t,z),  \hat{h}_k^{(n)}(t,z) \right)  \\
& + \mathcal{F} \left( \tilde{h}_k(t,z), P_k  \hat{h}_k^{(n)}(t,z) \right)
+ \mathcal{F} \left( \hat{h}_k^{(n)}(t,z), P_k \tilde{h}_k(t,z) \right) \nonumber,
\end{align}
where we defined
\begin{align*}
\tilde{h}_k (t,z) := \begin{cases}
- n c_1 \hat{h}_0^{(n-1)}(t,z) \qquad \; \textcolor{black}{\text{if}}\quad  \; k = 0 \\
- n c_1 \mathbb{L}_2 \hat{h}_k^{(n-1)}(t,z) \quad \textcolor{black}{\text{if}} \quad \; k \neq 0. \\
\end{cases}
\end{align*}
More precise, the only difference between $\tilde{h}_k(t,z)$ and $\mathbb{L}_2 \hat{h}_k^{(n-1)}(t,z)$ is the first summand (this is the case $k = 0$). 
So, continuing estimate \eqref{estimate13}:
\begin{align*}
&\frac{\partial}{\partial t} \mathcal{F} \left(  \hat{h}^{(n)}_k(t,z),  \hat{h}^{(n)}_k(t,z) \right) \leq - 2 \lambda \; \mathcal{F} \left(  \hat{h}^{(n)}_k(t,z),  \hat{h}^{(n)}_k(t,z) \right) \\
& \quad \;+ \bigg\vert \mathcal{F} \left( \tilde{h}_k(t,z), P_k  \hat{h}^{(n)}_k(t,z) \right)
+ \mathcal{F} \left( \hat{h}^{(n)}_k(t,z), P_k \tilde{h}_k(t,z) \right) \bigg\vert \\
&\leq - 2 \lambda \; \mathcal{F} \left(  \hat{h}^{(n)}_k(t,z),  \hat{h}^{(n)}_k(t,z) \right) \\
& \quad \; + \left\vert \mathcal{F} \left( \tilde{h}_k(t,z), P_k  \hat{h}^{(n)}_k(t,z) \right) \right\vert
+ \left\vert \mathcal{F} \left( \hat{h}^{(n)}_k(t,z), P_k \tilde{h}_k(t,z) \right) \right\vert.
\end{align*}
Using the Cauchy–Schwartz inequality leads to
\begin{align} \label{ZwischenschrittCauchySwarz}
&\frac{\partial}{\partial t} \mathcal{F} \left(  \hat{h}^{(n)}_k(t,z),  \hat{h}^{(n)}_k(t,z) \right) \\ &\leq - 2 \lambda \; \mathcal{F} \left(  \hat{h}^{(n)}_k(t,z),  \hat{h}^{(n)}_k(t,z) \right) \nonumber \\ 
& \quad \; + || \tilde{h}_k(t,z)||_{\mathcal{F}} \; || \hat{h}^{(n)}_k(t,z)||_{\mathcal{F}}
+ ||\hat{h}^{(n)}_k(t,z) ||_{\mathcal{F}}\; || \tilde{h}_k(t,z))||_{\mathcal{F}} \nonumber
\end{align}
We have the following relation
\begin{align*}
&\mathcal{F} \left( \tilde{h}_k (t,z), \tilde{h}_k (t,z) \right) \\
&= \left( n c_1 \right)^2 \bigg( \left\langle \hat{h}^{(n-1)}_0(t,z), P_0 \hat{h}^{(n-1)}_0(t,z) \right\rangle_{\ell^2} \\
& \quad \; + \sum_{k \in \mathbb{Z} \setminus \lbrace 0 \rbrace} \left\langle \mathbb{L}_2 \hat{h}^{(n-1)}_k(t,z), P_k \mathbb{L}_2 \hat{h}^{(n-1)}_k(t,z) \right\rangle_{\ell^2} \bigg) \\
&\leq \left( n c_1 \tilde{C} \right)^2 \mathcal{F} \left( \hat{h}^{(n-1)}_k(t,z), \hat{h}^{(n-1)}_k(t,z) \right)
\end{align*}
In the last inequality, we used the definition of $\mathbb{L}_2$. 
Now taking the roots, define $\tilde{c} := \vert c_1 \vert \tilde{C}$ and inserting into \eqref{ZwischenschrittCauchySwarz} leads to
\begin{align*}
\frac{\partial}{\partial t} ||\hat{h}^{(n)}_k(t,z)||_{\mathcal{F}}^2 &\leq - 2 \lambda \; ||\hat{h}^{(n)}_k(t,z)||_{\mathcal{F}}^2 + 2 n \tilde{c} || \hat{h}^{(n-1)}_k(t,z) ||_{\mathcal{F}} \; || \hat{h}^{(n)}_k(t,z)||_{\mathcal{F}}
\end{align*}
Dividing by $2 || \hat{h}^{(n)}_k(t,z) ||_{\mathcal{F}}$ gives \eqref{estimate8}.

Now, we can deduce \eqref{claim1} as it is described in the beginning of the proof. Finally inserting $\sqrt{\mathcal{E}\left(\tilde{h}^{(n)}\right)}(0,z) \leq H^n$ for all $n \in \mathbb{N}_0$ in \eqref{claim1} and using the binomial theorem leads directly to \eqref{claim2}. This finishes the proof.
\end{proof}

\subsubsection{General case under the assumption $\left\vert \frac{1}{n!} \frac{\partial^{(n)}}{\partial z^{(n)}} \sigma (z) \right\vert < C$}

The assumption that $\sigma (z)$ is linear in $z$, is very restrictive, so that our next goal is to loosen this condition. Therefore, from now on, the $z$-dependence of $\sigma(z)$ can be arbitrary, as long as $ \left\vert \frac{1}{n!} \frac{\partial^{(n)}}{\partial z^{(n)}} \sigma (z) \right\vert < C $ for all $n \in \mathbb{N}_0$, where $C$ is a constant independent of $n$. \textcolor{black}{Actually, this is a very weak constraint. It does not require that all derivatives have to be bounded by the same constant, the bound can grow with $n!$.} Further, we want to simplify the notation and set
\begin{align*}
\hat{h}_k^{(n)}(t,z) &:= \frac{\partial^{(n)}}{\partial z^{(n)}} \hat{h}_k (t,z) &
\tilde{h}_k^{(n)}(t,z) &:= \frac{\hat{h}_k^{(n)}(t,z)}{n!} \\
\sigma^{(n)} (z) &:=   \frac{\partial^{(n)}}{\partial z^{(n)}} \sigma (z) & \eta_k^{(n)}(t,z) &:= \mathbf{e}^{\lambda t} ||\tilde{h}_k^{(n)}(t,z) ||_{\mathcal{F}}.
\end{align*}
Then the following theorem, with the same explicit computable $\lambda$ as in theorem \ref{th:decay in derivatives}, holds:
\begin{theorem}
\label{th:decay in derivatives general}
Let $h(t)$ be a solution of \eqref{BGKEquation} with $0 < L$, $0 < \sigma _{min}  \leq \sigma (z) \leq \sigma _{max}$.
Further we assume $ \left\vert \frac{1}{n!} \sigma^{(n)} (z) \right\vert < C $ as well as $\mathcal{E}\left(\frac{\partial^{(n)}}{\partial z^{(n)}}\tilde{f}\right)(0,z) \leq H^{2n} \;$ for all $n \in \mathbb{N}_0$ for the initial data, then
, we have
\begin{align*}
\sqrt{\mathcal{E} \left( \frac{\partial^{(n)}}{\partial z^{(n)}} h \right)}(t,z) \leq \mathbf{e}^{- \lambda t} H^n + n! (1+H)^{n+1} \min \left\lbrace \mathbf{e}^{- \lambda t} (1 + \hat{C} t)^n, \; \mathbf{e}^{(\hat{C} - \lambda) t} 2^{n-1} \right\rbrace 
\end{align*}
for all $n \in \mathbb{N}$ with the same positive $\lambda$ as in theorem \ref{th:decay estimate} and a positive constant $\hat{C}.$
\end{theorem}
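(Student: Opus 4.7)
My plan is to imitate the argument of Theorem \ref{th:decay in derivatives}, but now I must retain \emph{all} Leibniz terms in \eqref{BkGEquationDerivative} rather than only the single remainder available in the linear case. For $k\in\mathbb{Z}\setminus\{0\}$ I write
\begin{align*}
\frac{\partial}{\partial t}\hat{h}_k^{(n)}(t,z) = -C_k\,\hat{h}_k^{(n)}(t,z) - \sum_{i=1}^{n}\binom{n}{i}\sigma^{(i)}(z)\,\mathbb{L}_2\,\hat{h}_k^{(n-i)}(t,z),
\end{align*}
with the analogue $\partial_t h_0^{(n)} = -\sigma(z)h_0^{(n)} - \sum_{i=1}^n \binom{n}{i}\sigma^{(i)}(z)h_0^{(n-i)}$ for $k=0$. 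Differentiating $\langle \hat{h}_k^{(n)}, P_k \hat{h}_k^{(n)}\rangle_{\ell^2}$ in time, the $C_k$-part is controlled uniformly in $k$ by Lemma \ref{th:MatrixEstimate}, and the remaining Leibniz cross terms are handled by Cauchy--Schwarz in the $\mathcal{F}$-inner product, exactly as was done for the single source term in the proof of Theorem \ref{th:decay in derivatives}. Summing on $k$ and dividing by $2\|\hat{h}_k^{(n)}\|_{\mathcal{F}}$, I expect to arrive at
\begin{align*}
\frac{d}{dt}\|\hat{h}_k^{(n)}\|_{\mathcal{F}} \leq -\lambda\,\|\hat{h}_k^{(n)}\|_{\mathcal{F}} + \tilde{C}\sum_{i=1}^{n}\binom{n}{i}|\sigma^{(i)}(z)|\,\|\hat{h}_k^{(n-i)}\|_{\mathcal{F}}.
\end{align*}

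The purpose of the rescaling $\tilde{h}_k^{(n)}:=\hat{h}_k^{(n)}/n!$ introduced just before the theorem is precisely to digest these binomial factors against the hypothesis $|\sigma^{(i)}(z)/i!|<C$: since $\binom{n}{i}/n! = 1/(i!(n-i)!)$, dividing the preceding inequality by $n!$ collapses the source into $\hat{C}\sum_{j=0}^{n-1}\|\tilde{h}_k^{(j)}\|_{\mathcal{F}}$ with a single constant $\hat{C}:=\tilde{C}C$ that is independent of $n$. Passing to $\eta_k^{(n)}(t):=e^{\lambda t}\|\tilde{h}_k^{(n)}\|_{\mathcal{F}}$ cancels the $-\lambda$ term and leaves the clean hierarchy
\begin{align*}
\frac{d}{dt}\eta_k^{(n)}(t) \leq \hat{C}\sum_{j=0}^{n-1}\eta_k^{(j)}(t), \qquad \eta_k^{(n)}(0) \leq \frac{H^n}{n!},
\end{align*}
where the initial bound comes from $\sqrt{\mathcal{E}(\partial_z^{(n)}h)(0,z)}\leq H^n$ after the $1/n!$ rescaling.

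From here I would invoke the two Gronwall-type lemmas of \cite{Li2017} already used in the linear case: one yields inductively the polynomial-in-$t$ bound $\eta_k^{(n)}(t)\leq H^n/n! + (1+H)^{n+1}(1+\hat{C}t)^n$, while the other produces the exponential alternative $\eta_k^{(n)}(t)\leq H^n/n! + (1+H)^{n+1}e^{\hat{C}t}2^{n-1}$, which is sharper once $\hat{C}t$ is large relative to $n$. Taking the minimum of the two and reversing the rescaling through the identity $\sqrt{\mathcal{E}(\partial_z^{(n)}h)}(t,z) = n!\,e^{-\lambda t}\,\eta_k^{(n)}(t)$ reproduces exactly the asserted bound, with the leading $e^{-\lambda t}H^n$ term corresponding to the homogeneous contribution and the $n!(1+H)^{n+1}\min\{\cdots\}$ term to the Duhamel-type contribution of the lower-order derivatives.

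The main obstacle I foresee is purely combinatorial: one must verify that the factor $n!$ really does absorb every $\binom{n}{i}$ when paired with $|\sigma^{(i)}/i!|<C$, so that the constant $\hat{C}$ in the hierarchy is genuinely independent of $n$, and one must confirm that the inductive bound delivered by the Li--Wang lemmas still carries the precise prefactor $(1+H)^{n+1}$ claimed in the statement rather than a larger combinatorial constant. Once these two checks are in place, the hierarchy has exactly the shape required by \cite{Li2017}, and the final induction and algebraic rearrangement are essentially forced.
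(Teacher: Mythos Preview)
Your proposal is correct and follows essentially the same route as the paper: retain all Leibniz terms in \eqref{BkGEquationDerivative}, use Lemma~\ref{th:MatrixEstimate} and Cauchy--Schwarz to reach the differential inequality for $\|\hat{h}_k^{(n)}\|_{\mathcal{F}}$, divide by $n!$ using $|\sigma^{(i)}/i!|<C$ to collapse the source to $\hat{C}\sum_{j=0}^{n-1}\|\tilde{h}_k^{(j)}\|_{\mathcal{F}}$, pass to $\eta_k^{(n)}$, and invoke Lemma~\ref{lem:Li2}. The only cosmetic discrepancy is that the paper appeals to a \emph{single} Li--Wang lemma (Lemma~\ref{lem:Li2}) that already outputs the minimum of the polynomial and exponential bounds, rather than two separate lemmas.
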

\begin{proof}
Repeating the same arguments as presented in the proof of theorem \ref{th:decay in derivatives} leads to
\begin{align*}
\frac{\partial}{\partial t} || \hat{h}_k^{(n)}(t,z) ||_{\mathcal{F}}^2 \leq - 2 || \hat{h}_k^{(n)}(t,z)||_{\mathcal{F}}^2 + 2 \tilde{C} \sum_{i=1}^n || \binom{n}{i} \sigma^{(i)} \hat{h}_k^{(n-i)}||_{\mathcal{F}} ||\hat{h}_k^{(n)}(t,z)||_{\mathcal{F}}
\end{align*}
Now we will use arguments presented in \cite{Li2017}. They are presented in lemma \ref{lem:Li2}. We first prove that all requirements are satisfied to use it.  Therefore we first use $ \left\vert \frac{1}{n!} \sigma^{(n)} (z) \right\vert < C $ for all $n \in \mathbb{N}_0$ to estimate further:
\begin{align*}
\frac{\partial}{\partial t} || \hat{h}_k^{(n)}(t,z) ||_{\mathcal{F}}^2\leq - 2 \lambda || \hat{h}_k^{(n)}(t,z)||_{\mathcal{F}}^2 + 2 \tilde{C} C ||\hat{h}^{(n)}_k(t,z)||_{\mathcal{F}} \sum_{i=1}^n \frac{n!}{(n-i)!} ||\hat{h}_k^{(n-i)}(t,z)||_{\mathcal{F}}
\end{align*}
\textcolor{black}{We denote $\hat{C}:= \tilde{C} C$, shift the index in the sum, and divide by}
$(n!)^2$ on both sides, we have
\begin{align*}
\frac{\partial}{\partial t} || \tilde{h}_k^{(n)}(t,z)||_{\mathcal{F}}^2 \leq - 2 \lambda ||\tilde{h}_k^{(n)}(t,z)||_{\mathcal{F}}^2 + 2 \hat{C} || \tilde{h}_k^{(n)}(t,z)||_{\mathcal{F}} \sum_{i=0}^{n-1} || \tilde{h}_k^{(i)}(t,z)||_{\mathcal{F}}
\end{align*}
and dividing by $ 2 || \tilde{h}_k^{(n)}(t,z) ||_{\mathcal{F}}$ leads to
\begin{align} \label{estimate14}
\frac{\partial}{\partial t} || \tilde{h}_k^{(n)}(t,z)||_{\mathcal{F}}\leq - \lambda || \tilde{h}_k^{(n)}(t,z)||_{\mathcal{F}} + \hat{C} \sum_{i=0}^{n-1} || \tilde{h}_k^{(i)}(t,z)||_{\mathcal{F}}.
\end{align}
\textcolor{black}{Then,we obtain for $\eta_k^{(n)}$ } 
\begin{align*}
\frac{\partial}{\partial t} \eta_k^{(n)}(t,z) \leq \hat{C} \sum_{i=0}^{n-1} \eta_k^{(i)}(t,z).
\end{align*}
Because of \textcolor{black}{$\left( \frac{\partial^{(n)}}{\partial z^{(n)}} h \right) (0,z) \leq H^{2n}$} we have $\eta_k^{(n)}(0,z) \leq \frac{H^n}{n!}$, so that we can use lemma \ref{lem:Li2} point-wise in $z$ to get 
\begin{align} \label{estimate24}
\eta_k^{(n)}(t,z) \leq \frac{H^n}{n!} + (1+H)^{n+1} \min \left\lbrace (1 + \hat{C} t)^n, \; \mathbf{e}^{\hat{C} t} 2^{n-1} \right\rbrace .
\end{align}
Now we multiply \eqref{estimate24} with $\mathbf{e}^{- \lambda t}$ to reach
\begin{align*}
|| \tilde{h}_k^{(n)}(t,z)||_{\mathcal{F}} \leq \mathbf{e}^{- \lambda t} \frac{H^n}{n!} + (1+H)^{n+1} \min \left\lbrace \mathbf{e}^{- \lambda t} (1 + \hat{C} t)^n, \; \mathbf{e}^{(\hat{C} - \lambda) t} 2^{n-1} \right\rbrace .
\end{align*}
Multiplying with $n!$ finishes the proof.
\end{proof} 

\section*{Appendix. }

\subsection{Inequalities and estimates from the literature}

The following two inequalities had first been introduced in \cite{Li2017}. Even so we use a slightly different notation in our article, the proofs can be taken from their article.
\begin{lemma}{}\label{lem:Li1}
Assume $ J = [ 0,\infty)$, $n \in \mathbb{N}_0$ and a sequence $f_{(l)} \in C^1(J,\mathbb{R}) \;$ for all $l \in \lbrace 0, \cdots, n \rbrace$. If further the system of inequalities
\begin{align} \label{estimate15}
\frac{\partial}{\partial t} f_{(l)} \leq - \lambda f_{(l)} + C l f_{(l-1)}, \quad \qquad l \in \lbrace 0, \cdots, n \rbrace
\end{align}
with constants $\lambda, \; C > 0$  holds, then
\begin{align} \label{estimate16}
f_{(n)}(t) \leq \mathbf{e}^{-\lambda t} \sum_{i=0}^n \binom{n}{i} \left( C t \right)^i f_{(n-i)}(0),
\end{align}
where we set $f_{(-1)}$ to zero.
\end{lemma}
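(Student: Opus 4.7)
The natural approach is induction on $n$, treating the scalar differential inequality for $f_{(n)}$ as a first-order linear ODE inequality driven by the previous level $f_{(n-1)}$. The base case $n=0$ reduces to $\partial_t f_{(0)} \leq -\lambda f_{(0)}$ (since $f_{(-1)}=0$), which by the standard Gronwall argument yields $f_{(0)}(t) \leq e^{-\lambda t} f_{(0)}(0)$, matching the claimed formula with only the $i=0$ term.

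For the inductive step, I would assume the bound \eqref{estimate16} at level $n-1$ and handle level $n$ by the integrating-factor trick: multiply $\partial_t f_{(n)} \leq -\lambda f_{(n)} + Cn f_{(n-1)}$ by $e^{\lambda t}$ to get
\begin{align*}
\frac{\partial}{\partial t}\bigl(e^{\lambda t} f_{(n)}(t)\bigr) \leq Cn\, e^{\lambda t} f_{(n-1)}(t).
\end{align*}
Insert the induction hypothesis for $e^{\lambda t} f_{(n-1)}(t)$ on the right-hand side, then integrate from $0$ to $t$ term-by-term. Each monomial $(Ct')^i$ integrates to $(Ct)^{i+1}/(C(i+1))$, producing a sum indexed by $i \in \{0,\ldots,n-1\}$.

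The only non-routine step is a binomial identity: after re-indexing the sum by $j=i+1$, one needs to check that $\binom{n-1}{j-1}\cdot \frac{n}{j} = \binom{n}{j}$ so that the coefficients in the integrated bound correctly produce $\binom{n}{j}$. Together with the surviving $f_{(n)}(0)$ initial term (which supplies the $j=0$ contribution $\binom{n}{0} f_{(n)}(0)$), this reassembles the full sum $\sum_{j=0}^n \binom{n}{j}(Ct)^j f_{(n-j)}(0)$. Dividing by $e^{\lambda t}$ closes the induction.

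I do not expect any real obstacle here: the only subtlety is keeping the index shift clean and noting that $f_{(-1)}\equiv 0$ is precisely what makes the base case work without an extra term. One could alternatively verify the formula directly by computing $\partial_t$ of the right-hand side and checking it satisfies \eqref{estimate15} with equality, which gives an equally short proof via a comparison principle for linear ODE inequalities, but the induction above is the most transparent route and mirrors the deterministic derivative estimates in \cite{Li2017}.
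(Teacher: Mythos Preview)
Your induction argument is correct and complete; the integrating-factor step, the term-by-term integration, and the binomial identity $\tfrac{n}{j}\binom{n-1}{j-1}=\binom{n}{j}$ are exactly what is needed. Note that the paper itself does not supply a proof of this lemma but simply refers to \cite{Li2017}, so there is no in-paper argument to compare against; your approach is the standard one and is what the cited reference does.
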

\begin{lemma}{}\label{lem:Li2}
Assume $ J = [ 0,\infty)$, $n \in \mathbb{N}_0$ and a sequence $f_{(l)} \in C^1(J, \mathbb{R}_{+}) \;$  for all $l \in \lbrace 0, \cdots,n\rbrace $.
\begin{align} \label{estimate18}
\frac{\partial}{\partial t} \tilde{f}_{(l)}(t) &\leq C \sum_{k = 0}^{l-1} \tilde{f}_{(k)}(t) \\
\tilde{f}_{(l)}(0) &\leq \frac{H^l}{l!} \nonumber
\end{align}
with constants $ \lambda, \; C > 0$, $H \geq 0$ and $\tilde{f}_{(l)}(t) := \mathbf{e}^{\lambda t} f_{(l)}(t)$  hold for all $l \in \lbrace 0, \cdots,n\rbrace$, then
\begin{align} \label{estimate19}
\tilde{f}_{(n)} (t) \leq \frac{H^n}{n!} + (1+H)^{n+1} \sum_{k=1}^n \frac{(C t)^k}{k!(k-1)!} \frac{(n-1)!}{(n-k)!}
\end{align}
and \eqref{estimate19} can further be relaxed to
\begin{align} \label{estimate20}
\tilde{f}_{(n)} (t) \leq \frac{H^n}{n!} + (1+H)^{n+1} \min \left\lbrace (1 + C t)^n, \; \mathbf{e}^{C t} 2^{n-1} \right\rbrace .
\end{align}
\end{lemma}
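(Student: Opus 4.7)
The plan is to establish \eqref{estimate19} by induction on $n$ and then derive \eqref{estimate20} as two elementary relaxations of \eqref{estimate19}. Since both hypothesis and conclusion are stated purely in terms of the rescaled sequence $\tilde{f}_{(l)}$, the constant $\lambda$ plays no role in the argument itself.

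The base case $n=0$ is immediate: the sum on the right of \eqref{estimate18} is empty, so $\partial_t\tilde{f}_{(0)}\leq 0$ and $\tilde{f}_{(0)}(t)\leq\tilde{f}_{(0)}(0)\leq H^0/0!=1$, matching the empty-sum right-hand side of \eqref{estimate19}. For the inductive step, it is convenient to rewrite \eqref{estimate19} in the equivalent form
\[
\tilde{f}_{(n)}(t)\leq \frac{H^n}{n!}+(1+H)^{n+1}\sum_{k=1}^n\binom{n-1}{k-1}\frac{(Ct)^k}{k!},
\]
using $(n-1)!/(k!(k-1)!(n-k)!)=\binom{n-1}{k-1}/k!$. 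Integrating \eqref{estimate18} from $0$ to $t$, using $\tilde{f}_{(n)}(0)\leq H^n/n!$, and substituting the inductive hypothesis into each $\tilde{f}_{(l)}(s)$ with $0\leq l\leq n-1$ produces two contributions: a linear-in-$t$ piece $Ct\sum_{l=0}^{n-1}H^l/l!$ from the initial-data parts of the IH, and a double sum $\sum_{l=1}^{n-1}(1+H)^{l+1}\sum_{k=1}^{l}\binom{l-1}{k-1}\frac{(Ct)^{k+1}}{(k+1)!}$ from the polynomial parts.

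The double sum is handled by the index shift $k\mapsto k+1$ (so the new inner index runs over $\{2,\dots,l+1\}$), swapping the order of summation, and observing that the resulting inner sum over $l$ satisfies
\[
\sum_{l=k-1}^{n-1}(1+H)^{l+1}\binom{l-1}{k-2}\leq (1+H)^{n+1}\binom{n-1}{k-1},
\]
which follows from $(1+H)^{l+1}\leq (1+H)^{n+1}$ for $l\leq n-1$ together with the hockey-stick identity $\sum_{j=k-2}^{n-2}\binom{j}{k-2}=\binom{n-1}{k-1}$ (applied with $j=l-1$). For the linear-in-$t$ piece, the coefficient-wise comparison $(1+H)^{n+1}=\sum_{l=0}^{n+1}\binom{n+1}{l}H^l\geq\sum_{l=0}^{n-1}H^l/l!$ (since $\binom{n+1}{l}\geq 1/l!$ for $0\leq l\leq n-1$) absorbs it into the $k=1$ term of the target, because $\binom{n-1}{0}/1!=1$. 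Combining these two estimates exactly reproduces \eqref{estimate19}. The main obstacle is this combinatorial bookkeeping: the exponent $n+1$ in $(1+H)^{n+1}$ and the factor $\binom{n-1}{k-1}$ are chosen precisely so that one integration advances both through the hockey-stick identity.

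For \eqref{estimate20}, start from the rewritten form of \eqref{estimate19} and bound the sum $S_n(t):=\sum_{k=1}^n\binom{n-1}{k-1}(Ct)^k/k!$ in two independent ways. First, the elementary ratio check $\binom{n-1}{k-1}/k!\leq\binom{n}{k}$ (equivalent to $n(k-1)!\geq 1$, trivially true for $k\geq 1$) gives $S_n(t)\leq\sum_{k=0}^n\binom{n}{k}(Ct)^k=(1+Ct)^n$. Second, $\binom{n-1}{k-1}\leq 2^{n-1}$ uniformly in $k$ together with extending the exponential series yields $S_n(t)\leq 2^{n-1}\sum_{k=1}^n(Ct)^k/k!\leq 2^{n-1}e^{Ct}$. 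Taking the minimum of these two upper bounds on $S_n(t)$ produces \eqref{estimate20}, completing the proof.
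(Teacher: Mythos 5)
Your proof is correct, and it is worth noting that it is more than the paper itself provides: the paper gives no proof of this lemma at all, deferring entirely to Li and Wang \cite{Li2017} with the remark that ``the proofs can be taken from their article.'' Your argument is a correct, self-contained induction in the spirit of that source. I checked the key steps: the rewriting of the coefficient as $\binom{n-1}{k-1}/k!$ is exact; integrating \eqref{estimate18} and inserting the strong inductive hypothesis does split the right-hand side into the linear-in-$t$ piece and the double sum as you claim; the index shift plus the hockey-stick identity $\sum_{j=k-2}^{n-2}\binom{j}{k-2}=\binom{n-1}{k-1}$, combined with $(1+H)^{l+1}\le(1+H)^{n+1}$ for $l\le n-1$ (valid since $H\ge 0$), correctly absorbs the double sum; and the comparison $\binom{n+1}{l}\ge 1/l!$ for $0\le l\le n-1$ correctly absorbs the linear piece into the $k=1$ term, since $\binom{n-1}{0}/1!=1$. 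The two relaxations for \eqref{estimate20} are also sound: $\binom{n-1}{k-1}/k!\le\binom{n}{k}$ reduces to $n(k-1)!\ge 1$, giving $(1+Ct)^n$, while $\binom{n-1}{k-1}\le 2^{n-1}$ and the exponential series give $2^{n-1}\mathrm{e}^{Ct}$; taking the minimum is then legitimate because both are upper bounds for the same sum. Your observation that $\lambda$ plays no role once everything is phrased in $\tilde{f}_{(l)}$ is also accurate and clarifies why the lemma's hypotheses mention $\lambda$ only through the definition of $\tilde{f}_{(l)}$. The one thing to make explicit if you write this up: the step where you substitute the inductive bound for each $\tilde{f}_{(l)}(s)$ under the integral uses monotonicity of the integral and $C>0$, which is where the sign information (nonnegativity of the $f_{(l)}$ and of all bounding terms) quietly enters; the base case $n=0$ with its empty sums, and $n=1$ where the double sum is empty, both check out.
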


\subsection{Proof of lemma \ref{th:MatrixEstimate}}

\begin{proof}[Proof of lemma \ref{th:MatrixEstimate}]
Note first that $C_k^{\ast} P_k + P_k C_k$ has the form of a block-diagonal-matrix
\begin{align*}
\begin{pmatrix}
D_{k,\alpha , \beta, \gamma, \sigma (z)} & 0 \\
0 & \tilde{I} \\
\end{pmatrix}
\end{align*} 
with $\tilde{I}$ being $2 \sigma(z)$ times the (infinite dimensional) identity matrix and
\begin{multline*}
D_{k,\alpha ,\beta, \gamma, \sigma (z)} := \\ \mbox{\small$\begin{pmatrix}
 2 l \alpha & 0 & l \left(\sqrt{2} \alpha-\beta\right) & 0 & 0 \\
 0 & 2 l \left(\sqrt{2} \beta-\alpha\right) & 0 & l \left(\sqrt{3} \beta-\sqrt{2} \gamma\right) & 0 \\
 l \left(\sqrt{2} \alpha-\beta\right) & 0 & 2 l \left( \sqrt{3} \gamma - \sqrt{2} \beta \right) & -\frac{i \gamma \sigma(z)}{k} & 2 l \gamma \\
 0 & l \left(\sqrt{3} \beta-\sqrt{2} \gamma\right) & \frac{i \gamma \sigma(z)}{k} & 2 \sigma(z)-2 l \sqrt{3} \gamma & 0 \\
 0 & 0 & 2 l \gamma & 0 & 2 \sigma(z) \\
\end{pmatrix}$}
\end{multline*}
where $l := \frac{2 \pi}{L}$. Because of $0<2\sigma_{min}<2 \sigma(z)$ the matrix $\tilde{I}$ is already positive definite, such that it only remains to show the positive definiteness of $D_{k,\alpha ,\beta, \gamma, \sigma (z)}$. However, instead of seeking $\alpha, \; \beta\; , \gamma$ such that the matrix $D_{k,\alpha ,\beta, \gamma, \sigma (z)}$ is positive definite for all $k \in \mathbb{Z} \setminus \lbrace 0 \rbrace$, we simplify the problem by setting $\beta = \sqrt{2} \alpha$ and $\gamma = \sqrt{3} \alpha$. Thus, we get
\begin{align*}
D_{k,\alpha , \sigma (z)} := \begin{pmatrix}
 2 l \alpha & 0 & 0 & 0 & 0 \\
 0 & 2 l \alpha & 0 & 0 & 0 \\
0 & 0 & 2 l \alpha & -\frac{i \sqrt{3} \alpha \sigma(z)}{k} & 2 \sqrt{3} l \alpha \\
 0 & 0 & \frac{i \sqrt{3} \alpha \sigma(z)}{k} & 2 \sigma(z)-6 l \alpha & 0 \\
 0 & 0 & 2 \sqrt{3} l \alpha & 0 & 2 \sigma(z) \\
\end{pmatrix}
\end{align*}
which \textcolor{black}{will be an easier structure to analyze.} However, \textcolor{black}{we note} that we have to pay for this with a reduction of the decay rate. 
Now we will use Sylvester's criterion to find 
a sufficient condition for $\alpha$, such 
that the matrix $D_{k,\alpha , \sigma (z)}$ is positive definite for all $k \in \mathbb{Z} \setminus \lbrace 0 \rbrace$
Therefore we define $\delta_{j} \left( k,\alpha,\sigma(z) \right)$ as the determinant of the lower right $ j \times j$ submatrix of $D_{k,\alpha , \sigma (z)}$ with $ 1 \leq j\leq 5$ and search for assumptions on $\alpha$, which lead to $\delta_{j} \left( k,\alpha,\sigma(z) \right) > 0$ for all $ 1 \leq j\leq 5 $. Thus we get
\begin{align*}
    \delta_1\left( k,\alpha,\sigma(z) \right) &= 2 \sigma(z) \\
    \delta_2\left( k,\alpha,\sigma(z) \right) &= 4 \sigma(z) \; ( \sigma(z) - 3 l \alpha) 
\end{align*}
\begin{align} \label{d3kd31}
\delta_3\left( k,\alpha,\sigma(z) \right) &= \alpha \left( 72 l^3 \alpha^2 - \left( 48 l^2 \sigma(z) + \frac{6 \sigma(z)^3}{k^2} \right) \alpha + 8 l \sigma(z)^2 \right) \nonumber \\
&\geq \alpha \left( 72 l^3 \alpha^2 - \left( 48 l^2 \sigma(z) + 6 \sigma(z)^3 \right) \alpha + 8 l \sigma(z)^2 \right) \nonumber \\
&= \delta_3\left( 1,\alpha,\sigma(z) \right)
\end{align}
\begin{align*}
    \delta_4\left( k,\alpha,\sigma(z) \right) 
&= 2 \alpha l \; \delta_3\left( k,\alpha,\sigma(z) \right) \\
\delta_5\left( k,\alpha,\sigma(z) \right) 
&= 4 \alpha^2 l^2 \; \delta_3\left( k,\alpha,\sigma(z) \right)
\end{align*}
The first determinant $\delta_1$ is positive because of the assumption $0 <  \sigma (z)$. The second determinant $\delta_2$ is positive if we have
\begin{align} \label{condition1}
\alpha < \frac{\sigma(z)}{3 l},
\end{align}
whereas $\delta_3, \delta_4$ and $\delta_5$ are positive if 
\begin{align} \label{condition2}
0<\alpha<\frac{8 l^2 \sigma(z) + \sigma(z)^3 - \sqrt{16 l^2 \sigma(z)^4 + \sigma(z)^6}}{24 l^3}.
\end{align}
So, to make sure that $D_{k,\alpha , \sigma (z)}$ is positive definite, we need to choose an $\alpha$ such that \eqref{condition1} and \eqref{condition2} hold. However, because of
\begin{align} \label{Zwischenabschätzung}
\frac{8 l^2 \sigma(z) + \sigma(z)^3 - \sqrt{16 l^2 \sigma(z)^4 + \sigma(z)^6}}{24 l^3} \leq \frac{\sigma(z)}{3 l} 
\end{align}
it is sufficient to find an $\alpha$ such that \eqref{condition2} is fulfilled. Equation \eqref{Zwischenabschätzung} is true since we have $\sqrt{16 l^2 \sigma(z)^4 + \sigma(z)^6} > \sigma(z)^3$.
However, it still remains to show that \eqref{condition2} can be fullfilled. 
So, we want to show that there exists an $\alpha_{max}$ such that
\begin{align} \label{condition3}
 0 < \alpha_{max} \leq \frac{8 l^2 \sigma(z) + \sigma(z)^3 - \sqrt{16 l^2 \sigma(z)^4 + \sigma(z)^6}}{24 l^3}.
\end{align}
Then \eqref{condition2} would be true for all $\alpha \in(0, \alpha_{max})$.
For proving this, we first note that
\begin{align} \label{estimate1}
0 &< \frac{1}{24 l^3} \; \sigma(z) \; \left( \sqrt{64 l^4 + 16 l^2 \sigma(z)^2 + \sigma(z)^4} - \sqrt{16 l^2 \sigma(z)^2 + \sigma(z)^4} \right) \nonumber \\
&=  \frac{8 l^2 \sigma(z) + \sigma(z)^3 - \sqrt{16 l^2 \sigma(z)^4 + \sigma(z)^6}}{24 l^3} := \alpha(l,\sigma(z))
\end{align}
 because of $\sigma(z) > 0$ and $l > 0$. Furthermore, $\alpha(l,\sigma(z))$ 
is a continuous function, such that if we take
\begin{align} \label{amax}
\alpha_{max} := \min_{\sigma(z) \in [\sigma_{min},\sigma_{max}]} \alpha(l,\sigma(z))
\end{align}
one gets $\alpha_{max} \leq \alpha(l,\sigma(z))$ for arbitrary fixed $l>0$. 
This, together with \eqref{estimate1} leads to \eqref{condition3}. 
Therefore, we get that $D_{k,\alpha , \sigma (z)}$ is positive definite for all $ \alpha \in ( 0, \alpha_{max})$ with $\alpha_{max}$ given by \eqref{amax}.

It remains to prove that the matrices $P_k$ are positive definite for this choice. In \cite{Achleitner2018} it is proven that $P_k$ is positive definite if $ \mid \alpha \mid^2 + \mid \beta \mid^2 + \mid \gamma \mid^2 < 1$. Since we set $\beta = \sqrt{2} \alpha$ and $\gamma = \sqrt{3} \alpha$ this reduces to $6 \alpha^2 < 1$.

One can compute that $\alpha(l, \sigma(z))$ takes its maximum at $l= \frac{\sqrt{3}}{4} \sigma(z)$ and we get
\begin{align} \label{maxaMax}
\alpha^2 \leq \alpha_{max}^2 < \left( \frac{8 l^2 \sigma(z) + \sigma(z)^3 - \sqrt{16 l^2 \sigma(z)^4 + \sigma(z)^6}}{24 l^3} \right)^2 < \frac{4}{9 \sqrt{3}}
\end{align}
if $l< \frac{\sigma_{max}}{2 \sqrt{2}},$
so that the matrices $P_k$ are also positive definite for all $ \alpha \in ( 0, \alpha_{max})$ with $\alpha_{max}$ from \eqref{amax} for $l< \frac{\sigma_{max}}{2 \sqrt{2}}$. 

Next, we want to find a lower bound for the smallest eigenvalue of $C_k^{\ast} P_k + P_k C_k$. All eigenvalues of $\tilde{I}$ are $2 \sigma(z)$ and because of the block diagonal structure, $D_{k,\alpha , \sigma (z)}$ has a double eigenvalue $2 l \alpha$ together with the eigenvalues of its lower $ 3 \times 3$ submatix
\begin{align*}
D_{k,\alpha , \sigma (z)}^{(3)} := \begin{pmatrix}
2 l \alpha & -\frac{i \sqrt{3} \alpha \sigma(z)}{k} & 2 \sqrt{3} l \alpha \\
\frac{i \sqrt{3} \alpha \sigma(z)}{k} & 2 \sigma(z)-6 l \alpha & 0 \\
2 \sqrt{3} l \alpha & 0 & 2 \sigma(z) \\
\end{pmatrix}.
\end{align*}
Let $ \lbrace \lambda_1, \; \lambda_2, \; \lambda_3 \rbrace$ be the eigenvalues of $D_{k,\alpha , \sigma (z)}^{(3)}$ arranged in increasing order. So our aim is to find the minimum of the eigenvalues $2 \sigma(z), 2 l \alpha$ and $\lambda_1$. 
We can estimate $\lambda_1$ from below by using the inequality of the arithmetic-geometric mean and get
\begin{align*}
\lambda_1(k,\alpha,\sigma(z)) &= \frac{\delta_3(k,\alpha,\sigma(z))}{\lambda_2 \lambda_3} \geq \delta_3(k,\alpha,\sigma(z)) \; \left( \frac{\lambda_2 + \lambda_3}{2} \right)^{-2} \\
&\geq \delta_3(k,\alpha,\sigma(z)) \; \left( \frac{\mathrm{Tr} \; D_{k,\alpha , \sigma (z)}^{(3)}}{2} \right)^{-2} 
= \delta_3(k,\alpha,\sigma(z)) \; \frac{1}{4\left( \sigma(z)-\alpha l \right)^2} > 0.
\end{align*}
since $D^{(3)}_{k, \alpha, \sigma(z)}$ is positive definite for $\alpha \in (0, \alpha_{max}).$
So, all in all, we need to find a lower bound of $\min \lbrace 2 l \alpha, \; \frac{\delta_3(k,\alpha,\sigma(z))}{4\left( \sigma(z)-\alpha l \right)^2}, \; 2 \sigma(z) \rbrace$. However, with  $ \alpha \in (0,\alpha_{max}) $ the following holds:
\begin{align} \label{Anhang}
\min \lbrace 2 l \alpha, \; \frac{\delta_3(k,\alpha,\sigma(z))}{4\left( \sigma(z)-\alpha l \right)^2}, \; 2 \sigma(z) \rbrace &= \min \lbrace 2 l \alpha, \; \frac{\delta_3(k,\alpha,\sigma(z))}{4\left( \sigma(z)-\alpha l \right)^2} \rbrace \nonumber \\
&\geq \min \lbrace 2 l \alpha, \; \frac{\delta_3(1,\alpha,\sigma(z))}{4\left( \sigma(z)-\alpha l \right)^2} \rbrace \nonumber \\
&= \frac{\delta_3(1,\alpha,\sigma(z))}{4\left( \sigma(z)-\alpha l \right)^2} := \lambda(l,\alpha,\sigma(z)).
\end{align}
The first equality is true due to \eqref{condition1}, the inequality follows from \eqref{d3kd31}. For the last equality
we have to show
\begin{align*}
\frac{ \alpha \left( 72 l^3 \alpha^2 - \left( 48 l^2 \sigma(z) + 6 \sigma(z)^3 \right) \alpha + 8 l \sigma(z)^2 \right) }{ 4 \left( \sigma(z) - \alpha l \right)^2 } \leq 2 l  \alpha
\end{align*}
with $\alpha \in (0, \alpha_{max})$. One can compute that this is equivalent to
%
%
\begin{align*}
\alpha  \bigg( 16 l^2 \left( 2 l \alpha - \sigma \right) - 3 \sigma^3 \bigg) \leq 0,
\end{align*}
which is true due to \eqref{condition1}.
To get an estimate independent of $\sigma(z)$, we define for fixed $l>0$ and $\alpha \in (0,\alpha_{max})$
\begin{align} \label{lambdaMin}
\lambda_{min}(l,\alpha) := \min_{\sigma(z) \in [\sigma_{min},\sigma_{max}]} \lambda(l,\alpha,\sigma(z)) > 0.
\end{align}
Then 
we get
\begin{align} \label{estimate2}
C_k^{\ast} P_k + P_k C_k \geq \lambda_{min}(l,\alpha) \; I.
\end{align}
Furthermore, a straight forward computation shows that the eigenvalues of $P_k$ are $ \lbrace 1 , \; 1 \pm \frac{ \alpha \sqrt{3+\sqrt{6}}}{k} , \;  1 \pm \frac{ \alpha \sqrt{3-\sqrt{6}}}{k} \rbrace$. These eigenvalues are positive for all $\; \alpha \in (0,\alpha_{max}), \; L>0, \; k \in \mathbb{N}$ according to \eqref{maxaMax}. Hence
\begin{align} \label{estimate3}
\left( 1 - \alpha \sqrt{3+\sqrt{6}} \right) \; I \leq P_k \leq \left( 1 + \alpha \sqrt{3+\sqrt{6}} \right) \; I
\end{align}
Combining \eqref{estimate2} and \eqref{estimate3} leads to
\begin{align} \label{declayrate}
C_k^{\ast} P_k + P_k C_k \geq 2 \mu  \; P_k
\end{align}
with $\mu = \frac{1}{2} \frac{\lambda_{min}(l,\alpha)}{\left( 1 + \alpha \sqrt{3+\sqrt{6}} \right)} >0,$
which completes the proof.
\end{proof}

\vskip2mm



          \end{document}